\theoremstyle{definition}
\newtheorem{defi}{Definition}[section]
\newtheorem{remark}[defi]{Remark}
\theoremstyle{plain}
\newtheorem{theorem}[defi]{Theorem}
 \newtheorem{prop}[defi]{Proposition}
\newtheorem{lemma}[defi]{Lemma}
\newtheorem{cor}[defi]{Corollary}
\numberwithin{equation}{section}
\let\TagsLeftOn\tagsleft@true
\let\TagsLeftOff\tagsleft@false
\newcommand{\R}{\mathbb R}
\newcommand{\N}{\mathbb N}
\newcommand{\de}{\, \mathrm{d}}
\newcommand{\del}{\partial}
\newcommand{\Vol}{\operatorname{Vol}}
\newcommand{\CG}{\mathcal{G}}
\newcommand{\CH}{\mathcal{H}}
\newcommand{\CM}{\mathcal{M}}
\newcommand{\CW}{\mathcal{W}}
\newcommand{\rd}{\mathrm d}
\newcommand {\RL}{\mathrm L}
\newcommand{\RB}{\mathrm B}
\newcommand{\RV}{\mathrm V}
\newcommand{\RW}{\mathrm W}
\newcommand{\RC}{\mathrm C}
\newcommand{\BS}{\mathbf S}
\newcommand{\BV}{\mathbf V}
\newcommand{\normalstek}[1]{\bar \sigma_{#1}}
\newcommand{\wks}{\xrightharpoonup{*}}
\newcommand{\abs}[1]{\left\lvert #1 \right\rvert}
\newcommand{\set}[1]{\left\{ #1 \right\}}
\newcommand{\norm}[1]{\left\| #1 \right\|}
\newcommand{\bo}\boldsymbol{}
\newcommand{\bigo}[2][]{O_{#1}\left( #2 \right)}
\newcommand{\smallo}[2][]{o_{#1}\left( #2 \right)}
\newcommand{\bone}{\bo{1}}
\newcommand{\ci}{(\textbf{M1})\xspace}
\newcommand{\cii}{(\textbf{M2})\xspace}
\newcommand{\cvi}{(\textbf{M3})\xspace}
\newcommand{\csi}{(\textbf{M1})\xspace}
\newcommand{\csvi}{(\textbf{M3})\xspace}
\DeclareMathOperator{\dist}{dist}
\renewcommand{\le}{\leqslant}
\renewcommand{\ge}{\geqslant}
\newcommand{\eps}{\varepsilon}
\renewcommand{\phi}{\varphi}
\renewcommand{\S}{\mathbb S}
\renewcommand{\bar}[1]{\overline{#1}}
\renewcommand\footnotemark{}
\begin{document}

\title{Flexibility of Steklov eigenvalues via boundary homogenisation
\footnote{\textbf{Keywords: }Steklov problem, boundary homogenisation, spectral shape optimisation}
\footnote{\textbf{MSC(2020): }Primary: 58J50, 35P15. Secondary: 35J20, 35B27}
}

\author{Mikhail Karpukhin \thanks{\textbf{M.K.:} Mathematics 253-37, Caltech, Pasadena, CA
        91125, USA; \newline
    \href{mailto:mikhailk@caltech.edu}{\nolinkurl{mikhailk@caltech.edu}}}
        \and
        Jean Lagac\'e \thanks{\textbf{J.L.:} Department of Mathematics, King's College London,
        The Strand, London, WC2R 2LS, UK; \newline
\href{mailto:jean.lagace@kcl.ac.uk}{\nolinkurl{jean.lagace@kcl.ac.uk}}} }
\date{}

\maketitle
\begin{abstract}
Recently, D. Bucur and M. Nahon used boundary homogenisation to show the remarkable flexibility of Steklov 
eigenvalues of planar domains. In the present paper we extend their result to higher dimensions and to 
arbitrary manifolds with boundary, even though in those cases the boundary does not generally exhibit any periodic 
structure. Our arguments use framework of variational eigenvalues and provides a different proof of the original results. 
Furthermore, we present an application of this  flexibility to the optimisation of Steklov eigenvalues under perimeter constraint.
It is proved that the best upper bound for normalised Steklov
eigenvalues of surfaces of genus zero and any fixed number of boundary
components can always be saturated by planar domains. This is the case even
though any actual maximiser (except for simply connected surfaces) is always far from
being planar themselves. In particular, it yields sharp upper bound for the first Steklov eigenvalue 
of doubly connected planar domains.

%  This paper is concerned with optimisations of the normalised Steklov
%  eigenvalues amongst planar domains with a fixed number of boundary components.
%  In a manifestation of the flexibility of the Steklov spectrum,
%  we prove that the best upper bound for normalised Steklov
%eigenvalues of surfaces of genus zero and any fixed number of boundary
%components can always be saturated by planar domains. This is the case even
%though any actual maximiser (except for simply connected surfaces) are always far from
%being planar themselves. In order to do so, we extend the boundary
%homogenisation result of Bucur--Nahon to arbitrary manifolds with boundary, even
%when the boundary does not exhibit any periodic structure. 
\end{abstract}
\thispagestyle{plain}
\pagestyle{mystyle}
\section{Introduction, main results and setting}

\subsection{Optimisation of Steklov eigenvalues}
Let $(\CM,g)$ be a complete smooth Riemannian manifold, $\Omega \subset \CM$
be a domain with non-empty Lipschitz boundary and $0 \not \equiv \beta :
\del \Omega \to [0,\infty)$ be a non-negative function. We refer to such an
$\Omega$ as a \emph{manifold with Lipschitz boundary}; any abstract manifold
with smooth boundary can be realised in this way. Consider the eigenvalue problem
\begin{equation}
  \label{prob:steklov}
\begin{cases}
  \Delta_g u = 0 & \text{in } \Omega, \\
  \del_\nu u = \sigma\beta u & \text{on } \del \Omega.
\end{cases}
\end{equation}
Under some integrability conditions on $\beta$ to be made explicit later (see
Theorem \ref{thm:boundaryhomogenisation}), the eigenvalues are discrete and form a sequence
\begin{equation}
  0 = \sigma_0(\Omega,g,\beta) < \sigma_1(\Omega,g,\beta) \le
  \sigma_2(\Omega,g,\beta) \le \dotso \nearrow \infty.
\end{equation}
For every $k$, the \emph{naturally normalised eigenvalue} is
\begin{equation}
  \label{eq:naturalnormalisation}
  \normalstek k(\Omega,g,\beta) = \sigma_k(\Omega,g,\beta) \frac{\int_{\del \Omega} \beta
  \de A_g}{\Vol_g(\Omega)^{1 - \frac 2 d}},
\end{equation}
see \cite{GKL,KM} for a discussion around the naturality of that normalisation. 
The case $\beta \equiv 1$ is of particular interest and is referred to as the
Steklov problem. The corresponding Steklov eigenvalues $\sigma_k(\Omega,g,1)$
are denoted simply as 
$\sigma_k(\Omega, g)$. 
For many known results
and open questions about the Steklov problem, the reader
can refer to the survey \cite{gpsurvey} and the references therein. 
In the present paper we are mainly concerned with the
optimisation problem for normalised Steklov eigenvalues. 

The first result of this type was obtained by Weinstock~\cite{wein} who proved
that the round disk maximises the first normalised Steklov eigenvalue in the
class of all bounded simply connected smooth planar domains. The optimisation problem for other
topologies of domains in $\mathbb{R}^2$ remains unsolved. At the same time, if
one does not impose any assumptions on the topology of the planar domain, then
the optimal upper bound for all normalised Steklov eigenvalues is
\begin{equation}
    \text{for all } k \in \N \qquad \normalstek k(\Omega,g) \le 8 \pi k,
\end{equation}
was found in~\cite{GKL}. 
The main goal of the present paper is to apply the
ideas of~\cite{bucurnahon} to the optimisation problem for
planar domains of fixed topology. Among other things, this allows us to
determine the optimal upper bound for the first normalized Steklov eigenvalue in
the class of planar domains with exactly $2$ boundary components.

As a starting point, let us note that an examination of Weinstock's proof yields
that the round disk continues to be the maximiser in the much larger class of
all simply connected Riemannian surfaces. The main observation of the present
paper is that the same holds for other topologies as well, namely, the
optimal upper bound for normalized Steklov eigenvalues for planar domains of
fixed topology does not increase after including arbitrary Riemannian surfaces
of the same topological type. To give a precise statement, for any $\gamma \ge
0$ and $b \ge 1$, we let $\Omega_{\gamma,b}$ be the compact connected surface
with boundary of genus $\gamma$ with $b$ boundary components, and we define 
$$
\Sigma_k(\gamma,b) = \sup_g\normalstek k(\Omega_{\gamma,b},g).
$$

\begin{theorem}
\label{thm:2d_optimisation}
For every $b \ge 1$ and $k\geqslant 0$ one has
$$
\sup_{\Omega \subset \R^2}\normalstek k(\Omega) =\Sigma_k(0,b),
$$ 
where the supremum is taken over the set of all bounded Lipschitz domains in $\R^2$ with
$b$ boundary components.
\end{theorem}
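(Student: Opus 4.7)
The inequality $\sup_{\Omega \subset \R^2}\normalstek{k}(\Omega) \le \optimstek{k}(0,b)$ is immediate: any bounded Lipschitz planar domain with $b$ boundary components is, as an abstract surface, a realisation of $\Omega_{0,b}$, and equipping it with the ambient Euclidean metric produces an admissible entry in the supremum defining $\optimstek{k}(0,b)$. My plan for the reverse inequality combines the uniformisation theorem with the boundary homogenisation result, Theorem \ref{thm:boundaryhomogenisation}.

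Fix $\eps > 0$ and choose a smooth metric $g$ on $\Omega_{0,b}$ with $\normalstek{k}(\Omega_{0,b}, g) \ge \optimstek{k}(0,b) - \eps$. Since $\Omega_{0,b}$ has genus zero, the uniformisation theorem for bordered Riemann surfaces (Koebe) yields a conformal diffeomorphism $\Phi \colon \Omega_{0,b} \to \Omega$ onto a bounded smooth planar domain $\Omega \subset \R^2$ with $b$ boundary components. Writing $g = \phi \cdot \Phi^* g_{\mathrm{eucl}}$ for a positive smooth conformal factor $\phi$, the conformal invariance of the Dirichlet energy in two dimensions together with the scaling of the boundary arclength element yield
\begin{equation}
\normalstek{k}(\Omega_{0,b}, g) = \normalstek{k}(\Omega, g_{\mathrm{eucl}}, \beta), \qquad \beta = \phi^{1/2}|_{\del \Omega}.
\end{equation}
In other words, the Steklov spectrum of $(\Omega_{0,b}, g)$, with its natural normalisation, agrees with that of the weighted problem \eqref{prob:steklov} on the planar domain $\Omega$ with density $\beta$.

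With the problem transported to the plane, I would invoke Theorem \ref{thm:boundaryhomogenisation} to produce a family of planar Lipschitz domains $\Omega_\eta$ whose boundary is obtained from $\del \Omega$ by an oscillation at scale $\eta$ encoding the density $\beta$, and which satisfies $\normalstek{k}(\Omega_\eta) \to \normalstek{k}(\Omega, g_{\mathrm{eucl}}, \beta)$ as $\eta \to 0$. Because the construction is local along the boundary, each of the $b$ components of $\del \Omega$ is replaced by a single oscillating Jordan curve, and $\Omega_\eta$ retains exactly $b$ boundary components for all small $\eta$. Combining the estimates yields
\begin{equation}
\sup_{\Omega' \subset \R^2}\normalstek{k}(\Omega') \ge \normalstek{k}(\Omega_\eta) \ge \optimstek{k}(0,b) - 2\eps
\end{equation}
for all sufficiently small $\eta$, and letting $\eps \to 0$ completes the proof.

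The main obstacles are two-fold. First, the conformal factor $\beta$ supplied by uniformisation need not a priori satisfy the integrability hypotheses of Theorem \ref{thm:boundaryhomogenisation}, particularly near irregularities of $\del \Omega$; this should be circumvented by initially approximating a near-optimising $g$ by a smooth metric whose uniformisation produces a $\beta$ that is bounded away from $0$ and $\infty$, appealing to the continuity or density properties of $\normalstek{k}$ developed earlier in the paper. Second, one must ensure that the homogenisation construction delivers planar domains of the prescribed topology rather than accidentally introducing extra boundary components or handles; this is a geometric constraint on the boundary profile in Theorem \ref{thm:boundaryhomogenisation}, and should follow from the fact that the approximating perturbations can be realised as small, compactly supported normal deformations applied separately to each of the $b$ boundary components of $\Omega$.
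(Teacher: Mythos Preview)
Your proposal is correct and follows essentially the same route as the paper: Koebe uniformisation to transport the problem to a planar domain with weight $\beta$, followed by Theorem~\ref{thm:boundaryhomogenisation} to remove the weight at the cost of oscillating the boundary. Two comments on the obstacles you raise. First, the integrability of $\beta$ is handled in the paper not by approximation but by uniformising specifically to a \emph{circle domain} and then invoking a result from \cite{baratchartbourgeoisleblond} guaranteeing that the boundary derivative of the conformal map lies in $\RL^p(\del\Omega)$ for some $p>1$, which is more than enough for the $\RL\log\RL$ hypothesis; in fact, since you already chose $g$ smooth and the circle domain has smooth boundary, standard boundary regularity for conformal maps would give $\beta$ smooth outright, so your worry is largely moot. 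Second, the topology-preservation concern is already part of the conclusion of Theorem~\ref{thm:boundaryhomogenisation} (item~(3)), so no additional argument is needed there.
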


\begin{remark}
    It follows from our proof that in fact, $\Sigma_k(\gamma,b)$ is saturated by
domains in a surface of constant curvature for every $\gamma \ge 0$. 
\end{remark}

The quantities $\Sigma_k(\gamma,b)$ have received a lot of attention
following the influential work of Fraser and Schoen~\cite{fraschoen2}, who established the
connection between $\Sigma_k(\gamma,b)$ and free boundary minimal immersions of
$\Omega_{g,b}$ into a Euclidean ball. In particular, they showed that for the
annulus
$\Omega_{0,2}=\mathbb{A}$, $\Sigma_1(0,2)$ is achieved by a
metric $g_{cc}$ on the so-called {\em critical catenoid} in $\mathbb{B}^3$.
Combining this result with Theorem~\ref{thm:2d_optimisation} one obtains the
following.

\begin{cor}
\label{cor:annulus}
 Let $\Omega\subset \mathbb{R}^2$ be a smooth bounded domain with $2$ boundary components. Then one has
\begin{equation}
\label{ineq:annuli}
\normalstek 1(\Omega) < \normalstek 1(\mathbb{A}, g_{cc})\approx 4\pi/1.2.
\end{equation}
The inequality is sharp, i.e. there exists a sequence of domains $\Omega_n$ such
that $\normalstek 1(\Omega_n)\to  \normalstek 1(\mathbb{A}, g_{cc})$.
\end{cor}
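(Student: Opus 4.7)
The plan is to combine Theorem~\ref{thm:2d_optimisation} with the Fraser--Schoen identification of the maximiser on the annulus. Specialising Theorem~\ref{thm:2d_optimisation} to $k=1$, $b=2$ gives
\begin{equation}
\sup_{\Omega \subset \mathbb{R}^2} \bar\sigma_1(\Omega) = \Sigma_1(0,2),
\end{equation}
where the supremum runs over smooth bounded planar domains with two boundary components. By Fraser--Schoen~\cite{fraschoen2} the right-hand side equals $\bar\sigma_1(\mathbb{A}, g_{cc})$. Chaining the two equalities settles the sharpness clause at once: by the very definition of the supremum, there exists a sequence $\Omega_n$ of planar doubly connected domains with $\bar\sigma_1(\Omega_n)\to\bar\sigma_1(\mathbb{A}, g_{cc})$.

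For the strict inequality I would argue by contradiction. Suppose some smooth bounded planar domain $\Omega$ with two boundary components satisfies $\bar\sigma_1(\Omega) = \bar\sigma_1(\mathbb{A}, g_{cc})$. Since $\Omega$ is diffeomorphic to $\mathbb{A}$, its induced Euclidean metric is an admissible Riemannian metric on $\mathbb{A}$, and the assumption says that this flat metric realises $\Sigma_1(0,2)$. However, the Fraser--Schoen rigidity asserts that any such maximiser is homothetic to the critical catenoid metric $g_{cc}$, which is induced from a nonplanar minimal immersion into $\mathbb{R}^3$ and hence has strictly negative Gaussian curvature on a nonempty open set; in particular it is not flat, contradicting the flatness of the Euclidean metric on $\Omega$.

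The main point to verify is that the version of the Fraser--Schoen theorem being invoked really provides the needed rigidity of the maximiser and not merely its existence. If one wants to avoid relying on that rigidity, an alternative is to combine Theorem~\ref{thm:2d_optimisation} with a strict-monotonicity argument coming from the boundary homogenisation construction: starting from any planar $\Omega$, one can apply the procedure used in the proof of Theorem~\ref{thm:2d_optimisation} to a perturbation whose limit metric is slightly closer to $g_{cc}$ in the Fraser--Schoen sense, thereby producing a planar domain with strictly larger $\bar\sigma_1$ and excluding the possibility of equality directly.
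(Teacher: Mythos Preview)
Your argument for the sharp upper bound and the existence of a maximising sequence is correct and matches the paper's. The gap is in the strictness argument: the Fraser--Schoen rigidity does \emph{not} say that a maximising metric is homothetic to $g_{cc}$. In two dimensions the normalised Steklov eigenvalue $\bar\sigma_1(\Omega,g)=\sigma_1(\Omega,g)\,\mathcal H^1(\partial\Omega)$ is invariant under any conformal change $g\mapsto e^{-2\omega}g$ with $\omega\equiv 0$ on $\partial\Omega$, so the maximiser is only determined up to such factors. Concretely, what~\cite[Theorem~1.3]{fraschoen2} gives is that if $\bar\sigma_1(\Omega,g_0)=\Sigma_1(0,2)$ then $(\Omega,g_0)$ is isometric to $(\mathbb A,e^{-2\omega}g_{cc})$ for some smooth $\omega$ vanishing on $\partial\mathbb A$. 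Your curvature contradiction then collapses: nothing prevents $e^{-2\omega}g_{cc}$ from being flat in the interior, since one may solve $\Delta_{g_{cc}}\omega=-K_{g_{cc}}$ with $\omega=0$ on $\partial\mathbb A$ to obtain exactly a flat conformal representative.

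The paper closes this gap by extracting information from the \emph{boundary}, where the conformal freedom is frozen. The Dirichlet problem $\Delta_{g_{cc}}\omega=-K_{g_{cc}}$, $\omega|_{\partial\mathbb A}=0$ has a unique solution, and since the isometry group of the critical catenoid acts transitively on $\partial\mathbb A$, both $\kappa_{cc}$ and $\partial_\nu\omega$ are constant on $\partial\mathbb A$. Hence the geodesic curvature of $\partial\Omega$ in the flat metric, namely $\kappa=\kappa_{cc}-\partial_\nu\omega$, is a single constant along both boundary components. In $\mathbb R^2$ this forces $\partial\Omega$ to consist of two circles of the \emph{same} radius, which cannot bound a doubly connected planar domain. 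Your proposed alternative via ``strict monotonicity from homogenisation'' is not a substitute: the homogenisation construction produces sequences converging to a prescribed weighted limit but gives no mechanism for strictly increasing $\bar\sigma_1$ starting from an arbitrary planar $\Omega$.
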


\begin{remark}
  Theorem \ref{thm:2d_optimisation} and Corollary \ref{cor:annulus} can be
  extended to Lipschitz rather than smooth domains. In such a case, however,
  inequality~\eqref{ineq:annuli} would stop being strict. In order to rule out
  the equality case one would need to show a regularity theorem for $\normalstek
  1$-maximisers in the spirit of~\cite[Theorem 1.4]{KS}.
\end{remark}

Many sequences of planar domains saturate bound~\eqref{ineq:annuli}. 
For any bounded $\Omega\subset \mathbb{R}^2$ 
conformal to $(\mathbb{A}, g_{cc})$ one can find a maximizing sequence $\Omega_n$ such that 
$\Omega_n\to \Omega$ in Hausdorff distance. Here is a concrete example of one of those maximising
sequences, which follows from the proof of Theorem~\ref{thm:2d_optimisation} and
the geometry of $g_{cc}$. Let $t_1$ be the unique solution of $\cosh t = t$. Set
$\Omega_0 =\{z\in \mathbb{R}^2,\,\,r<|z|<R\}$, where $\log \frac{R}{r} = 2t_1$.
Then define $\Omega_n \subset \R^2$ to be the (topological) annulus whose outer boundary is the
same as $\Omega_0$, but whose inner boundary oscillates uniformly with period
$2\pi/n$, where the amplitude of the oscillations is chosen so  that the length
of the inner boundary component coincides with the length of the outer boundary.
As $n \to \infty$, the amplitude in this construction is of order
$\bigo{n^{-1}}$ and the  domains $\Omega_n$ converge in the Hausdorff metric to
$\Omega_0$ while $\normalstek 1(\Omega_n) \to \normalstek 1(\mathbb A,g_{cc})$
as $n \to \infty$.

\subsection{Flexibility of the Steklov spectrum}
Theorem~\ref{thm:2d_optimisation} can be proved by using as a main tool the material already
contained in~\cite{bucurnahon}. Despite that, we take this opportunity to give
an alternative proof using the framework of measure eigenvalues developed in
\cite{GKL}. This allows and to extend the results of~\cite{bucurnahon} to higher
dimension and in a geometric context.

We first make the observation that every compact connected smooth manifold with boundary
can be realised as a bounded smooth domain in a complete Riemannian manifold
$(\CM,g)$. Through this equivalence, we define manifolds with Lipschitz boundary
as bounded Lipschitz domains in a complete Riemannian manifold. The weighted
Steklov problem \eqref{prob:steklov} can be defined for those manifolds as well, the
normal derivative being only well-defined almost everywhere.

% Theorem~\ref{thm:2d_optimisation} is a manifestation of a more general
% phenomenon present in any dimension. 
We prove the following flexibility result for Steklov
eigenvalues, which was first observed in \cite{bucurnahon} for planar domains.

\begin{theorem}
  \label{thm:boundaryhomogenisation}
  Let $\Omega$ be a compact connected Riemannian manifold with Lipschitz
  boundary
  and let $0 \not \equiv \beta : \del \Omega \to [0,\infty)$. Suppose that
  $\beta \in \RL^{d-1}(\del \Omega)$ (if $d \ge 3$) or $\beta \in \RL \log
  \RL(\del \Omega)$ (if $d = 2$). 
  Then, there exists a family of domains $\Omega^\eps \subset \Omega$ with Lipschitz boundary such
  that
  \begin{enumerate}
    \item As $\eps \to 0$, $\del \Omega^\eps \to \del \Omega$ in the Hausdorff
      distance. 
    \item\label{it:normalisedconverge} For every $k \in \N$ the normalised
        eigenvalues $\normalstek k(\Omega^\eps,g)
      \to \normalstek k(\Omega,g,\beta)$ as $\eps\to 0$.
  \item For every $\eps > 0$, $\Omega$ and $\Omega^\eps$ have the same
      topological type.
  \end{enumerate}
\end{theorem}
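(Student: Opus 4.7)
The plan is to proceed by first reducing to a convenient class of weights, then carrying out a local homogenisation construction, and finally justifying eigenvalue convergence via the measure eigenvalue framework of~\cite{GKL}.

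First, I would approximate $\beta$ by piecewise constant weights $\beta_n$ subordinate to finer and finer Lipschitz partitions of $\del \Omega$, with $\beta_n \to \beta$ in $\RL^{d-1}(\del \Omega)$ when $d \ge 3$ and in $\LLL(\del \Omega)$ when $d = 2$. The integrability hypothesis is precisely the one needed for the weighted trace inequality controlling $\int_{\del \Omega} \beta |u|^2 \de A_g$ by a Sobolev norm of $u$, which in turn gives continuity of the eigenvalues $\sigma_k(\Omega, g, \beta)$ with respect to $\beta$ in this norm. A diagonal argument then reduces the problem to the case where $\beta$ is simple.

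Next comes the local homogenisation. On each piece of $\del \Omega$ where $\beta$ takes a constant value $c_i$, I would cover a collar neighbourhood by Lipschitz coordinate charts and, inside each chart, replace the corresponding boundary piece by a roughened Lipschitz surface $\Gamma^\eps$ lying strictly inside $\Omega$ and built from comb- or pillar-like features of height and spacing of order $\eps$. No periodicity is required: the pattern only has to be locally well-calibrated so that $\Gamma^\eps$ Hausdorff-converges to the original piece while its surface-area measure $\de A_g$ weak-$*$ converges to $c_i \de A_g$ on it. Gluing the charts in a collar of $\del \Omega$ produces $\Omega^\eps \subset \Omega$ of the same topological type as $\Omega$ and satisfying the Hausdorff convergence of (i).

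Eigenvalue convergence is then obtained from~\cite{GKL}: the Steklov problem on $\Omega^\eps$ is recast as a measure eigenvalue problem on $\Omega$ whose spectral measure is the pushforward of $\de A_g$ from $\del \Omega^\eps$ to $\del \Omega$ along the nearest-point projection. By construction these pushforwards weak-$*$ converge to $\beta \de A_g$, and continuity of the variational eigenvalues under such limits — together with the convergence of the normalising constants $\int_{\del \Omega^\eps} \de A_g \to \int_{\del \Omega} \beta \de A_g$ — yields the convergence in~\eqref{it:normalisedconverge}.

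The main obstacle is verifying that the pushforward measures form a uniformly admissible family in the sense of~\cite{GKL}, i.e.\ that a weighted trace inequality for $\Omega^\eps$ holds with constants uniform in $\eps$. This is precisely where the critical integrability $\RL^{d-1}$ (resp.\ $\LLL$) is used, and it requires a delicate uniform capacitary estimate adapting the planar periodic argument of~\cite{bucurnahon} to the Riemannian non-periodic setting; carrying this step out is the technical heart of the proof.
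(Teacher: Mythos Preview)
Your overall architecture---reduce to a convenient class of weights, perform a non-periodic boundary corrugation, and invoke the variational eigenvalue framework of~\cite{GKL}---matches the paper's. But you have misidentified both the mode of convergence that is required and where the real work lies.

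Weak-$*$ convergence of the surface measures $\CH^{d-1}\lfloor_{\del\Omega^\eps}$ to $\beta\,\CH^{d-1}\lfloor_{\del\Omega}$ is \emph{not} enough to invoke the continuity results of~\cite{GKL}: Proposition~\ref{prop:weakenough} requires convergence in the dual of $\RW^{1,d/(d-1)}(\Omega)$ (for $d\ge 3$) or of $\RW^{1,2,-1/2}(\Omega)$ (for $d=2$). The paper obtains this by a direct computation, showing that for $f\in\RC^1$,
\[
\langle \mu_\eps - \mu, f\rangle \ll \norm{f}_{\RW^{1,1}(N^\eps)},
\]
where $N^\eps$ is a collar of thickness $\asymp\eps$ around $\del\Omega$; this uses only the explicit area element of the corrugated boundary and the fundamental theorem of calculus in the normal direction. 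H\"older then upgrades this to convergence in $\RW^{1,p}(\Omega)^*$ for every $p>1$. Conversely, the step you flag as the ``main obstacle''---uniform admissibility via capacitary estimates---is essentially free here: condition~\cii follows from the ordinary trace inequality, and condition~\cvi from the fact that the $\Omega^\eps$ are equi-Lipschitz. No capacitary argument in the style of~\cite{bucurnahon} enters; bypassing it is precisely the point of routing the proof through the dual-norm convergence.

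Two smaller points. The paper approximates $\beta$ by \emph{smooth} positive densities (after normalising to $\beta\ge 1$), not piecewise constant ones; the former interacts better with the explicit choice of amplitude $\alpha=((\beta^2-1)/|\nabla w_z^\eps|)^{1/2}$ in the Vorono\u{\i}-tent construction. And the Lipschitz boundary case is not handled by working directly in Lipschitz charts: the paper first exhausts $\Omega$ by smooth subdomains via Verchota's approximation~\cite{VerchotaThesis}, uses~\cite{BGT} for Steklov stability under that exhaustion, transports $\beta$ by the associated bi-Lipschitz boundary maps, and only then runs the smooth homogenisation and density approximation, extracting a diagonal subsequence at the end.
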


As with \cite{bucurnahon}, the proof is based on homogenisation of the boundary.
However, when $d \ge 3$ the boundary may no longer carry a periodic
structure which means that classical homogenisation constructions do not work in
that setting. Instead, we adapt the geometric homogenisation ideas from \cite{GL}, which do not 
require any periodic structure. Furthermore, we interpret the statement
$\eqref{it:normalisedconverge}$ of 
Theorem~\ref{thm:boundaryhomogenisation} in the formalism of variational eigenvalues,
which in turn allows us to apply the general convergence results presented in \cite{GKL}.
In particular, this approach results in a more streamlined proof compared to~\cite{bucurnahon}.
Let us note
that boundary homogenisation of the Steklov problem in dimension $d \ge 3$ was
studied by Ferrero--Lamberti in \cite{FLstability}, however as with most
boundary homogenisation results it required domains
in Euclidean space to be of product type; we make no such geometric assumptions.

Theorem~\ref{thm:2d_optimisation} is a consequence of
Theorem~\ref{thm:boundaryhomogenisation}, Koebe uniformization theorem and
conformal invariance of Steklov eigenvalues for $d=2$. 
For $d\ge 3$, Steklov
eigenvalues are no longer conformally invariant, but one still has the following
corollary of Theorem~\ref{thm:boundaryhomogenisation}.

\begin{cor}
Let $(M,g)$ be a closed Riemannian manifold. Then for any $k\ge 0$ one has
$$
\sup_{\Omega}\normalstek k(\Omega,g) = \sup_{\Omega,\,\beta\in \RC_+(\del \Omega)}\normalstek k(\Omega,g,\beta),
$$
where $\Omega$ varies over all smooth domains $\Omega\subset M$.
\end{cor}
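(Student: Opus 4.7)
The inequality
\[
\sup_{\Omega}\normalstek k(\Omega,g) \;\le\; \sup_{\Omega,\,\beta \in \RC_+(\del \Omega)}\normalstek k(\Omega,g,\beta)
\]
is immediate by taking $\beta \equiv 1$ on the right-hand side, since $\normalstek k(\Omega,g,1) = \normalstek k(\Omega,g)$. The substance of the statement is the reverse inequality, and the plan is to deduce it directly from Theorem~\ref{thm:boundaryhomogenisation}.

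Fix a smooth domain $\Omega \subset M$ and a continuous positive function $\beta \in \RC_+(\del\Omega)$. Because $M$ is closed and $\del\Omega$ is compact, $\beta$ is bounded on $\del\Omega$, so in particular $\beta \in \RL^{d-1}(\del\Omega)$ when $d\ge 3$ and $\beta \in \LLL(\del\Omega)$ when $d = 2$. Theorem~\ref{thm:boundaryhomogenisation} then applies and provides a family $\{\Omega^\eps\}$ of Lipschitz domains contained in $\Omega$ (hence in $M$) such that $\normalstek k(\Omega^\eps,g) \to \normalstek k(\Omega,g,\beta)$ as $\eps \to 0$.

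The only technical obstacle is that Theorem~\ref{thm:boundaryhomogenisation} produces Lipschitz rather than smooth domains, while the supremum on the left-hand side of the corollary is taken over smooth domains only. To bridge this, I would invoke the standard approximation result that every Lipschitz domain in $M$ can be approximated by smooth domains in the Hausdorff sense while preserving all Steklov eigenvalues in the limit (this follows, for instance, from the variational framework used in \cite{GKL}, in which the weighted Steklov problem on a Lipschitz domain is the limit of unweighted Steklov problems on smooth domains). Applied to each $\Omega^\eps$, this yields smooth domains $\tilde\Omega^{\eps,n} \subset M$ with $\normalstek k(\tilde\Omega^{\eps,n},g) \to \normalstek k(\Omega^\eps,g)$ as $n \to \infty$.

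A diagonal argument then produces a sequence of smooth domains whose $k$-th normalised Steklov eigenvalue converges to $\normalstek k(\Omega,g,\beta)$, showing $\sup_{\Omega'}\normalstek k(\Omega',g) \ge \normalstek k(\Omega,g,\beta)$. Taking the supremum over smooth $\Omega$ and $\beta \in \RC_+(\del\Omega)$ on the right gives the reverse inequality. The main (mild) obstacle is thus the passage from Lipschitz to smooth domains; everything else is a direct application of Theorem~\ref{thm:boundaryhomogenisation} together with the trivial direction.
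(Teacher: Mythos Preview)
Your proposal is correct and matches the paper's intent: the corollary is stated without proof as an immediate consequence of Theorem~\ref{thm:boundaryhomogenisation}, and you have filled in exactly the details one would expect. The Lipschitz-to-smooth approximation you flag as the only technical point is already handled in the paper itself (Section~\ref{sec:Lipschitz}, via Verchota's approximation and Proposition~\ref{prop:bgt}), so the diagonal argument goes through without difficulty.
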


Informally, this corollary states that the introduction of density does not
change the optimal upper bound for the normalized Steklov eigenvalues. At the
same time, the problem with density is more natural from the geometric
viewpoint~\cite{KM}.

\subsection{Plan of the paper}

In Section \ref{sec:conformal}, we prove Theorem \ref{thm:2d_optimisation} and
its Corollary \ref{cor:annulus} using conformal changes of variable and assuming
Theorem \ref{thm:boundaryhomogenisation}. Then, in
Section \ref{sec:flex} we prove Theorem \ref{thm:boundaryhomogenisation}. This
is done by first assuming that $\Omega$ and $\beta$ are smooth, using a
geometric homogenisation procedure on the boundary. Then, we relax the
smoothness assumption and in turn approximate eigenvalues for singular
densities, then domains with Lipschitz boundary, in the end extracting a
diagonal subsequence from these procedures.

\subsection{Notation}

We make extensive use throughout the paper of Landau's asymptotic notation. We
write
\begin{itemize}
  \item indiscriminately, $f_1 = \bigo{f_2}$ or $f_1 \ll f_2$ to mean that there
  exists $C > 0$ such that $\abs{f_1} \le C f_2$;
\item $f_1 \asymp f_2$ to mean that $f_1 \ll f_2$ and $f_2 \ll f_1$;
\item $f_1 = \smallo{f_2}$ to mean that $f_1/f_2 \to 0$.
\end{itemize}
The limit in that last bullet point will be either as a parameter tends to $0$
or $\infty$ and will be clear from context. The use of a subscript, for instance
$f_1 \ll_\Omega f_2$ means that the constant $C$ or the quantities involved in
the definition of the limit may depend on the subscript.

We make use of a generalisation of $\RL^p$ spaces, called Orlicz spaces. Given
$\Phi$
an increasing, nonegative convex function on $[0,\infty)$, $\Phi(L)(\Omega)$ is the space
\begin{equation}
  \Phi(L)(\Omega) := \set{f : \Omega \to \R \text{ measurable}: \exists \eta > 0 \text{
  s.t. } \int_\Omega \Phi(\abs{f/\eta}) \de v_g < \infty}.
\end{equation}
In addition to $\Phi(x) = x^p$ (which corresponds to $\RL^p$ spaces), we also
will refer to the case $\Phi(x) = e^x$, denoted $\exp \RL$, $\Phi(x) = x
\log(1+x)$, denoted $\RL \log \RL$ which is dual to $\exp \RL$, and $\Phi(x) = x^2 \log(1+x)^{-1/2}$ denoted
$\RL^2 (\log \RL)^{-1/2}$. For a reference on Orlicz space, see
\cite{bennettsharpley}.

\subsection*{Acknowledgements} 
The authors are grateful to A. Girouard, C. Gordon, A. Hassannezhad and I. Polterovich for useful
discussions.
The research of M.K. is partially supported by the
NSF Grant DMS-2104254. The research of J.L. was partially supported by EPSRC grant
EP/T030577/1, and he is thankful for the hospitality of the University of
Bristol.

\section{Conformal changes of the metric}
\label{sec:conformal}

In this section we prove Theorem \ref{thm:2d_optimisation} and its corollary
assuming Theorem \ref{thm:boundaryhomogenisation}. We start by introducing the
notion of variational eigenvalues and look at how they behave under a conformal
change of variables. 

\subsection{Function spaces and variational eigenvalues}

We study the weighted Steklov problem \ref{prob:steklov} through the formalism
developed in \cite{GKL}, see also \cite{kok,KLP}. For any domain with Lipschitz
boundary $\Omega \subset \CM$
and any Radon measure $\mu$ supported on $\bar \Omega$, we define the Sobolev
spaces
$\RW^{1,p}(\Omega,\mu)$ as the closure of $\RC^\infty(\bar \Omega)$ under
the norm
\begin{equation}
  \norm{f}_{\RW^{1,p}(\Omega,\mu)}^p = \int_\Omega \abs{\nabla f}^p \de
  v_g + \int_\Omega \abs f^p \de \mu;
\end{equation}
we write $\RW^{1,p}(\Upsilon) := \RW^{1,p}(\Upsilon, \rd v_g)$ for the usual
Sobolev space. 

We say that a measure $\mu$ is admissible if the trace operator $T_\mu :
\RW^{1,2}(\Omega) \to \RL^2(\Omega,\mu)$ is compact. We note that under such
conditions $\RW^{1,2}(\Omega,\mu)$ is isomorphic to $\RW^{1,2}(\Omega)$, see
\cite[Theorems 3.4 and 3.5]{GKL} For an admissible
measure $\mu$ and $f \in \RC^\infty(\bar \Omega)$ we define the Rayleigh quotients
\begin{equation}
  R_{g,\mu}(f) := \frac{\int_\Omega \abs{\nabla f}^2 \de v_g}{\int_\Omega
  \abs f^2 \de \mu}.
\end{equation}
From this Rayleigh quotient we define the variational eigenvalues
\begin{equation}
\lambda_k(\Omega,g,\mu) = \inf_{F_{k+1}} \sup_{f \in F_{k+1} \setminus \set 0}
    R_{g,\mu}(f)
\end{equation}
where the infimum is taken over all $(k+1)$-dimensional subspaces $F_{k+1}
\subset \RC^\infty(\bar\Omega)$ that remain $(k+1)$-dimensional in
$\RL^2(\Omega,\mu)$. Admissibility of $\mu$ ensures that the variational
eigenvalues are discrete and form a sequence (see \cite[Proposition 4.1]{GKL})
\begin{equation}
  0 = \lambda_0(\Omega,g,\mu) < \lambda_1(\Omega,g,\mu) \le
  \lambda_2(\Omega,g,\mu) \le \dotso \nearrow \infty.
\end{equation}
The main example of variational eigenvalues employed in the present paper is the following. 
Let $0 \not \equiv \beta \in \RL^{d-1}(\del\Omega;[0,\infty))$ (if $d \ge 3$) or $\beta
\in \RL \log \RL(\del \Omega;[0,\infty))$ (if $d = 2$) and
$\CH^{d-1}\lfloor_{\del \Omega}$ be the
restriction of the Hausdorff measure to $\del \Omega$. Then
$\lambda_k(\Omega,g,\beta\CH^{d-1}\lfloor_{\del \Omega}) =
\sigma_k(\Omega,g,\beta)$ as defined in~\eqref{prob:steklov}. 

%\begin{lemma}
%  \label{lem:change}
%  Let $(\Omega,g)$ be a compact smooth Riemannian manifold, possibly
%with boundary. Let $\mu$ be a Radon measure on $\Omega$ and $F\colon
%  \Omega \to \Omega$ be a diffeomorphism, such that both $\mu$ and $F_* \mu$ are
%  admissible on $(\Omega,g)$ and $(\Omega,F_* g)$ respectively.
%  Then for any $k\geqslant 0$ one has $\lambda_k(\Omega,g,\mu) =
%  \lambda_k(\Omega,F_* g,F_*\mu)$. 
%  If $\Omega$ is a surface and $F$ is
%  conformal, then $\lambda_k(\Omega,g,\mu)= \lambda_k(\Omega,g,F_*\mu)$
%\end{lemma}
%\begin{proof}
%  The general case follows directly from the definition of the variational eigenvalues and
%  the fact that diffeomorphisms preserve smooth functions. For $\Omega$ a
%  surface and $F$ conformal, we use in addition the fact that the Dirichlet
%  energy is a conformal invariant in dimension $2$.
%\end{proof}

\subsection{Conformal optimisation}

We are now ready to prove the optimisation theorems for $d = 2$ under the
assumption of Theorem \ref{thm:boundaryhomogenisation}.
\begin{proof}[Proof of Theorem \ref{thm:2d_optimisation}]
    Let $(\Omega_{0,b},g)$ be a surface with Lipschitz boundary of genus $0$
    with $b$ boundary components. To prove our claim, it is sufficient to find a
    family of domains $\Omega^\eps \subset \R^2$ with $b$ boundary components so that
    $\normalstek k(\Omega^\eps,g_0) \to \normalstek k(\Omega_{0,b},g)$.

    By Koebe's uniformisation theorem \cite{koebe},
    there exists a circle domain $\Omega \subset \R^2$ (i.e. a domain whose
    boundary is disjoint union of circles) and a conformal diffeomorphism $\phi
    : \Omega \to \Omega_{0,b}$ such that $g_0 = \phi^* g$. It follows from
    \cite[Theorem 1.6]{KLP} that $\normalstek k(\Omega_{0,b},g) =
    \normalstek k(\Omega,g_0,\abs{\rd\phi})$ for every $k \in \N$. Furthermore, it
    follows from the proof of \cite[Lemma 5.1]{baratchartbourgeoisleblond} that
    there is $p > 1$ so that $\abs{\rd
    \phi} \in \RL^p(\del \Omega)$ . Therefore, by
    Theorem \ref{thm:boundaryhomogenisation} there exists a sequence of domains
    $\Omega^\eps \subset \Omega$ with the same topological type so that
    $\normalstek k(\Omega^\eps,g_0) \to \normalstek k(\Omega,g_0,\abs{\rd
    \phi})$ as $\eps \to 0$, concluding the proof.
\end{proof}

\begin{proof}[Proof of Corollary~\ref{cor:annulus}] The inequality~\eqref{ineq:annuli} and 
its sharpness follows immediately from Theorem~\ref{thm:2d_optimisation}
and~\cite[Theorem 1.3]{fraschoen2}. 
It remains to show that the equality can not be achieved by a smooth domain $\Omega$. 
Suppose that it does, then by~\cite[Theorem 1.3]{fraschoen2} 
there exists $\omega\in C^\infty(\mathbb{A})$ such that $\omega = 0$ on $\partial\mathbb{A}$ 
and $(\Omega,g_0)$ is isometric to
$(\mathbb{A},e^{-2\omega}g_{cc})$, where $g_{cc}$ is a metric 
on a free boundary minimal annulus in $\mathbb{B}^3$. Then the formula for Gauss
curvature in a conformal metric implies that $\omega$ is solution to the
following problem
\begin{equation}
\label{eq:Kgcc}
\begin{cases}
\Delta_{g_{cc}}\omega = -K_{g_{cc}}&\text{ on $\mathbb{A}$};\\
\omega = 0&\text{ on $\partial\mathbb{A}$.}
\end{cases}
\end{equation} 
Let $\kappa$ and $\kappa_{cc}$ be the geodesic curvture of $\Omega$ and critical
catenoid respectively. Recall that the isometry group of the critical catenoid
acts transitively on its boundary. Thus, $\kappa_{cc}$ is constant. Similarly,
since the solution to~\eqref{eq:Kgcc} is unique, the function $\partial_\nu\omega$
is also constant on $\partial \mathbb{A}$. Then one has $\kappa =
\kappa_{cc}-\partial_\nu\omega$ is also constant. The only curves of constant
geodesic curvature $\kappa$ on $\mathbb{R}^2$ are circles of radius
$\kappa^{-1}$. Hence $\partial\Omega$ consists of two circles of the same
radius, which is impossible.
\end{proof}

\section{Flexibility of the spectrum}
\label{sec:flex}

In this section we prove Theorem~\ref{thm:boundaryhomogenisation}, first under the
assumptions that $\partial\Omega$ is smooth and $\beta>0$ is a smooth density,
then under the weaker assumption that $\del \Omega$ is Lipschitz and $\beta$ is
in an appropriate integrability class.  
The section is organised as follows. First, we describe the boundary
homogenisation construction yielding the appropriate domains $\Omega^\eps$.
Then, we briefly recall abstract tools defined in \cite{GKL} 
to study eigenvalue continuity results, and we use them in order to obtain
continuity of the Steklov eigenvalues of $\Omega^\eps$ to weighted Steklov
eigenvalue on $\Omega$. Finally, we extend the results to the rough case.

\subsection{Boundary homogenisation}

\label{sec:boundaryhomogenisation}
This construction combines elements found in \cite[Section
2]{GL} (for the geometric distribution of the perturbations) and in
\cite{bucurnahon} (for the type of perturbation). A distinction from the
construction in \cite{bucurnahon} is that the approximation is done ``from the
inside'', allowing us to perform the construction intrinsically in the geometric
setting. In this subsection we assume that $\Omega$ has smooth boundary and
$0<\beta\in \RC^\infty(\partial\Omega)$. This assumption will be relaxed later in
Section~\ref{sec:Lipschitz}.
Invariance of normalised eigenvalues under scaling of the density allows us
to furthermore assume that  $\beta \ge 1$.

Let $h$ be the induced metric on $\del \Omega$, and assume that $\eps > 0$ is small enough that $h$ is
uniformly almost Euclidean in balls of radius $3 \eps$. In other words assume that in
geodesic polar coordinates around any $z \in \del \Omega$, $h$ reads
\begin{equation}
  h(\rho,\theta) = \de \rho^2 + \rho^2 g_{\S^{d-2}} + r(\rho,\theta)
\end{equation}
where $g_{\S^{d-2}}$ is the round metric on the $d-2$-dimensional sphere, and
$r$ is a symmetric $2$-tensor such that
\begin{equation}
  \norm{r}_{\RC^1(B_\eps(z))} = \bigo[\Omega]{\eps}.
\end{equation}

For every $\eps > 0$, let $\BS^\eps$ be a maximal $\eps$-separated subset of
$\del \Omega$ and let $\BV^\eps$ be the Vorono\u{\i} tesselation associated with
$\BS^\eps$, i.e. $\BV^\eps := \set{V_z^\eps : z \in \BS^\eps}$, where
\begin{equation}
  V_z^\eps := \set{x \in \del \Omega : \dist(x,z) \le \dist(x,y) \text{ for all
  } y \in \BS^\eps}
\end{equation}
and the distance is computed with respect to the metric $h$. We construct a sequence of domains $\Omega^\eps \subset \Omega$ in the following way.
For every $z \in \BS^\eps$ and $\theta \in \S^{d-2}$, let $\rho_{\theta,z}$ be
the distance from $z$ to $\del V_z^\eps$ along the geodesic starting with
direction
$\theta$. Then, define $w_z^\eps : V_z^\eps \to \R$ as
\begin{equation}
  w_z^\eps(\rho,\theta) = \eps\left( 1 - \frac{\rho}{\rho_{\theta,z}} \right).
\end{equation}
Then, $w_z^\eps$ is piecewise smooth, vanishes on $\del V_z^\eps$ and satisfies
the estimates
\begin{equation}
  \norm{w_z^\eps}_{\infty} = \eps \qquad \text{and} \qquad \norm{\nabla
  w_z^\eps}_{\infty} \asymp 1.
\end{equation}
For any smooth nonnegative function $\alpha : \del \Omega \to \R$, we have that 
\begin{equation}
  \nabla (\alpha w_z^\eps) = \alpha \nabla w_z^\eps + \bigo{\eps}.
\end{equation}
In a neighbourhood of size $2 \eps \norm{\alpha}_\infty$ of the boundary $\del
\Omega$, write Fermi coordinates as $x = (y,t)$, where $t$ is the distance along
the unit speed geodesic normal to the boundary at $y$. Define
\begin{equation}
  Q_z^\eps := \set{(y,t) : y \in V_z^\eps \text{ and } t < \alpha(y)
  w_z^\eps(y)}
\end{equation}
and
\begin{equation}
  Z_z^\eps := \set{(y,t) : y \in V_z^\eps \text{ and } t = \alpha(y)
  w_z^\eps(y)}.
\end{equation}
Finally, we define $\Omega^\eps$ as
\begin{equation}
  \Omega^\eps := \Omega \setminus \bigcup_{z \in \BS^\eps} Q_z^\eps,
\end{equation}
which has boundary
\begin{equation}
  \del \Omega^\eps = \bigcup_{z \in \BS^\eps} Z_z^\eps.
\end{equation}
We note that the family $\Omega^\eps$ has equi-Lipschitz boundary,
with the constant depending only on $g, \del \Omega,$ and $\alpha$. Furthermore,
\begin{equation}
  \Vol_g(\Omega \setminus \Omega^\eps) \ll \eps.
\end{equation}
Finally, for almost every $y\in V_z^\eps$, if $x = (y,t) \in Z_z^\eps$ then the area
element of $\del \Omega^\eps$ at $x$ is given by
\begin{equation}
  \label{eq:areaelement}
  \de A_{\del \Omega^\eps} \big|_{x} = \left(\sqrt{1 + \alpha^2 \abs{\nabla
  w_z^\eps}^2} + \bigo{\eps}\right) \de A_{\del \Omega} \big|_{y}. 
\end{equation}
We choose 
\begin{equation}
  \label{eq:alphachoose}
  \alpha = \left( \frac{\beta^2 - 1}{\abs{\nabla w_z^\eps}} \right)^{1/2},
\end{equation}
which we can do since we assumed $\beta \ge 1$.

\subsection{Continuity of eigenvalues --- the smooth setting}
We start by introducing conditions under which which variational eigenvalues are
continuous with respect to the measures used to define them. For $n \in \N$, let $\Omega_n \subset \Omega$. Let $\mu_n, \mu$ be Radon measures supported respectively
on $\Omega_n, \Omega$, we introduce the following three conditions:

\begin{itemize}
  \item[\ci] $\mu_n\wks\mu$ as measures on $\Omega$ and $\Vol_g(\Omega \setminus
  \Omega_n) \to 0$;
  \item[\cii] the measures $\mu$, $\mu_n$ are admissible for all
    $n$;
  \item[\cvi] there is an equibounded family of extension maps $J_n : \CW^{1,2}(\Omega_n,\mu_n)
    \to \CW^{1,2}(\Omega,\mu_n)$.
\end{itemize}

The following proposition appears as \cite[Proposition 4.11]{GKL}.

\begin{prop}
  \label{prop:weakenough}
  Suppose that $\Omega_n \subset \Omega$ is a sequence of domains and $\mu,
  \mu_n$ are Radon measures on respectively $\Omega, \Omega_n$
  satisfying \csi--\csvi. If $d \ge 3$, assume that $\mu_n \to \mu$ in
  $\RW^{1,\frac{d}{d-1}}(\Omega)^*$. If $d=2$, assume that $\mu_n \to \mu$ in 
  $\RW^{1,2,-1/2}(\Omega)^*$. Then, for all $k \in \N$
  \begin{equation}
    \lim_{n \to \infty} \lambda_k(\Omega_n,\mu_n) = \lambda_k(\Omega,\mu).
  \end{equation}
\end{prop}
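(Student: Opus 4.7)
The plan is to prove $\lambda_k(\Omega_n, \mu_n) \to \lambda_k(\Omega, \mu)$ by bounding the sequence from above and below separately, using the min-max characterisation on both sides.

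For the $\limsup$: I fix $\delta > 0$ and let $F \subset \RC^\infty(\bar\Omega)$ be a $(k+1)$-dimensional subspace realising $\lambda_k(\Omega, \mu)$ up to error $\delta$. I would use $F$ itself as a test space for each $\lambda_k(\Omega_n, \mu_n)$. Every $f \in F$ is continuous on $\bar\Omega$, so \ci yields $\int_\Omega f^2 \, d\mu_n \to \int_\Omega f^2 \, d\mu$ uniformly for $f$ in the unit sphere of $F$; since $\Vol_g(\Omega \setminus \Omega_n) \to 0$, the Dirichlet integrals converge as well. Thus for large $n$, $F$ remains $(k+1)$-dimensional in $\RL^2(\Omega_n, \mu_n)$ and provides a valid competitor, giving $\limsup_n \lambda_k(\Omega_n, \mu_n) \le \lambda_k(\Omega, \mu) + O(\delta)$.

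For the $\liminf$: Let $u_0^n, \ldots, u_k^n$ be the first $k+1$ eigenfunctions on $(\Omega_n, \mu_n)$, orthonormal in $\RL^2(\Omega_n, \mu_n)$, and set $U_j^n := J_n u_j^n$. The upper bound already forces $\|u_j^n\|_{\RW^{1,2}(\Omega_n, \mu_n)}$ to be uniformly bounded, and equiboundedness of $J_n$ from \cvi transfers this to a uniform bound on $\|U_j^n\|_{\RW^{1,2}(\Omega)}$. Extracting a subsequence, $U_j^n \wk U_j$ weakly in $\RW^{1,2}(\Omega)$ for each $j$. To close the argument it would suffice to show that $\{U_j\}_{j=0}^k$ is orthonormal in $\RL^2(\Omega,\mu)$, since weak lower semicontinuity of the Dirichlet energy together with the min-max principle applied to $\spn\{U_0, \ldots, U_k\}$ then yields the desired inequality. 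To establish orthonormality, I decompose
\[
    \int_\Omega U_i^n U_j^n \, d\mu_n - \int_\Omega U_i U_j \, d\mu = \int_\Omega (U_i^n U_j^n - U_i U_j) \, d\mu + \int_\Omega U_i^n U_j^n \, d(\mu_n - \mu).
\]
The first term vanishes by the compactness of the trace operator associated with the admissible limit measure $\mu$ (condition \cii). For the second, the hypothesis gives $\mu_n \to \mu$ in the stated Sobolev dual, and one checks that the product $U_i^n U_j^n$ is uniformly bounded in $\RW^{1, d/(d-1)}(\Omega)$ when $d \ge 3$: the embedding $\RW^{1,2} \hookrightarrow \RL^{2d/(d-2)}$ combined with Hölder's inequality using $\frac{d-2}{2d} + \frac 1 2 = \frac{d-1}{d}$ places $U_i^n \nabla U_j^n$ in $\RL^{d/(d-1)}$. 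For $d = 2$ the analogous bound relies on the Moser--Trudinger embedding $\RW^{1,2} \hookrightarrow \exp \RL$, which is precisely dual to the integrability class $\RL \log \RL$ underlying the $\RW^{1,2,-1/2}$-norm appearing in the hypothesis.

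The main obstacle is this passage to the limit in $\int_\Omega U_i^n U_j^n \, d\mu_n$. Weak $\RW^{1,2}$-convergence alone is insufficient because $\mu$ is typically supported on a set of zero bulk volume (for example $\del \Omega$ when $\mu = \beta \CH^{d-1}\lfloor_{\del \Omega}$), so interior Rellich compactness carries no direct information about convergence in $\RL^2(\mu)$. The choice of Sobolev dual norm in the hypothesis is calibrated precisely to the regularity enjoyed by products of $\RW^{1,2}$ functions, and it is this matching of function space scales that makes the closure argument work.
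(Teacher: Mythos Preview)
The paper does not supply a proof of this proposition; it is quoted from \cite[Proposition 4.11]{GKL}, so there is no argument in the paper to compare against. Your sketch follows the standard two-sided min--max strategy and is essentially correct, including the identification of the exponent $d/(d-1)$ as precisely the one for which products of $\RW^{1,2}$ functions lie in $\RW^{1,d/(d-1)}$.

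One step deserves more care. In the $\liminf$ direction you appeal to ``weak lower semicontinuity of the Dirichlet energy'', but applied naively to $U_j^n$ on $\Omega$ this only yields
\[
  \int_\Omega |\nabla U_j|^2 \le \liminf_n \int_\Omega |\nabla U_j^n|^2,
\]
and the right-hand side is the energy of the \emph{extension}, not $\lambda_j(\Omega_n,\mu_n) = \int_{\Omega_n} |\nabla u_j^n|^2$. Equiboundedness of $J_n$ in \csvi does not by itself force the excess $\int_{\Omega\setminus\Omega_n}|\nabla U_j^n|^2$ to vanish. The remedy is short: for any $\phi \in \RL^2(\Omega)$ one has $\bone_{\Omega_n}\phi \to \phi$ strongly by the volume condition in \ci, whence $\bone_{\Omega_n}\nabla U_j^n \wk \nabla U_j$ weakly in $\RL^2(\Omega)$; lower semicontinuity applied to \emph{this} sequence gives $\int_\Omega |\nabla U_j|^2 \le \liminf_n \int_{\Omega_n} |\nabla u_j^n|^2$, which is what the argument actually needs (and the same works for arbitrary linear combinations $\sum c_j U_j^n$).
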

\begin{remark}
 The space $\RW^{1,2,-1/2}(\Omega)$ is the space of all functions in $\RL^2
 (\log \RL)^{-1/2}$ such that their distributional gradient also belongs in that
 space. It is a space which is contained $\RW^{1,p}(\Omega)$ for all $1 \le p <
 2$ so that the convergence in the previous theorem can be verified in the dual
 of any of those spaces.
\end{remark}

\begin{proof}[Proof of Theorem \ref{thm:boundaryhomogenisation} under smoothness
  assumptions]

Our goal is
to apply Proposition \ref{prop:weakenough} with 
\begin{equation}
\mu_\eps =
\CH^{d-1}\bigg|_{\del \Omega^\eps} \text{ and } \mu = \beta\CH^{d-1}\bigg|_{\del
\Omega}.
\end{equation}
It is a simple
observation to see that $\Vol({\Omega^\eps}) \to \Vol(\Omega)$, and
\eqref{eq:areaelement} and \eqref{eq:alphachoose} tell us that $\mu^\eps(\Omega)
\wks \mu(\Omega)$, so that Condition \ci is verified.

Condition \cii follows from the trace inequality and the fact that $\beta\geqslant 1$. Condition \cvi follows from the fact that
for all $\eps$, $\Omega^\eps$ are Lipschitz domains whose Lipschitz constant is
controlled by $C_\Omega \sup_{x \in \Omega} \beta(x)$, and $C_\Omega$ depends on
$\Omega$ through the metric. 

It only remains to show that $\mu_\eps \to \mu$ in $\RW^{1,p}(\Omega)^*$ for all
$p>1$. Let $N^\eps$ be a $2\eps \norm{\alpha}_\infty$-tubular neighbourhood of
$\del \Omega$, 
so that $\del \Omega^\eps \subset N^\eps$. It is sufficient to
show that for every $f \in \RW^{1,1}(\Omega)$,
\begin{equation}
  \label{eq:w11enough}
  \langle \mu_\eps - \mu, f \rangle_{\RW^{1,1}(\Omega)} \le c
  \norm{f}_{\RW^{1,1}(N_\eps)}
\end{equation}
for some $c > 0$. Indeed, it follows from~\eqref{eq:w11enough} that for $f \in
\RW^{1,p}(\Omega)$, $p>1$,
\begin{equation}
  \langle \mu_\eps - \mu, f \rangle_{\RW^{1,p}(\Omega)} \le c
  \norm{f}_{\RW^{1,1}(N_\eps)} \le c \eps^{\frac{p-1}{p}}
    \norm{f}_{\RW^{1,p}(\Omega)}.
\end{equation}
By density, it is sufficient to prove \eqref{eq:w11enough} assuming that $f$ is
of class $\RC^1$. Write
\begin{equation}
  \label{eq:firststep}
  \begin{aligned}
  \langle \mu_\eps - \mu,f \rangle_{\RW^{1,1}(\Omega)} &= \int_{\del \Omega^\eps}
  f \de A_{\del \Omega^\eps} - \int_{\del \Omega} f \beta \de A_{\del \Omega} \\
  &=  \sum_{z \in \BS^\eps} \left[\int_{Z_z^\eps} f \de A_{\del \Omega^\eps} - \int_{V_z^\eps} f
  \beta \de A_{\Omega}\right].
\end{aligned}
\end{equation}
For any $t \in [0,2\eps \norm{\alpha}_\infty)$ and $y \in \del \Omega$ write
\begin{equation}
  \label{eq:ftc}
  f(y,t) = f(y,0) + \int_0^t \del_s f(y,s) \de s. 
\end{equation}
It follows from \eqref{eq:areaelement} and \eqref{eq:alphachoose} that for all
$z \in \BS^\eps$, 
\begin{equation}
  \begin{aligned}
    \int_{Z_z^\eps} f \de A_{\del \Omega^\eps} &=  \int_{V_z^\eps} f(y,\alpha(y)
    w(y) ) (\beta(y) + \bigo{\eps}) \de A_{\del \Omega} \\
    &=  \int_{V_z^\eps}  (\beta + \bigo{\eps})f \de A_{\del \Omega} +
    \int_{V_z^\eps} (\beta + \bigo{\eps}) \int_0^{t(y)} \del_s f(y,s) \de s \de
    A_{\del \Omega},
\end{aligned}
\end{equation}
where $t(y) = \alpha(y)w(y)$.
This means that we can rewrite \eqref{eq:firststep} as
\begin{equation}
  \langle \mu_\eps - \mu,f \rangle_{\RW^{1,1}(\Omega)} = \sum_{z \in \BS^\eps} \bigo{\eps}
  \int_{V_z^\eps} f \beta \de A_{\del \Omega} + \bigo{1} \int_{V_z^\eps}
  \int_0^{t(y)} \del_s f(y,s) \de s \de A_{\del \Omega}.
\end{equation}
We claim that the operator $T^\eps : \RW^{1,1}(N^\eps)
\to \RL^1(\del \Omega)$ has norm $\norm{T^\eps} \ll \eps^{-1}$. 
Indeed, integrating~\eqref{eq:ftc} over $t$ yields
$$
2\eps \norm{\alpha}_\infty f(y,0) = \int _0^{2\eps \norm{\alpha}_\infty} f(t)\de t - \int_0^{2\eps \norm{\alpha}_\infty}\int_0^t\del_s f(y,s) \de s\de t.
$$
Changing the order of integration and integrating over $y$ completes the proof of the claim.
%Indeed, the
%Anzellotti--Giaquinta characterisation of the norm of trace
%\cite{anzellottigiaquinta} tells us that for any $\delta > 0$,
%\begin{equation}
%  \norm{T^\eps} \ll_{\Omega} \sup_{x \in \del N^\eps} \sup \set{
%    \frac{\CH^{d-1}(\del^* E \cap \del
%  N^\eps)}{\CH^{d-1}(\del^* E \cap N^\eps)}
%  : E \subset N^\eps \cap B_\delta(x),
%\Per(E,N^\eps) < \infty}.
%\end{equation}
%Now, for small enough $\delta$ the relative isoperimetric inequality in $\Omega$
%tells us that for every $E \subset N^\eps \cap B_\delta(x)$, 
%\begin{equation}
%    \frac{\CH^{d-1}(\del^* E \cap \del
%    N^\eps)}{\CH^{d-1}(\del^* E \cap N^\eps)} \ll_\Omega \eps^{-1}.
%\end{equation}

Thus, one has 
\begin{equation}
  \abs{\sum_{z \in \BS^\eps} \bigo{\eps} \int_{V_z^\eps} f \beta \de A_{\del
  \Omega}} \ll \norm{\beta}_{\infty} \norm{f}_{\RW^{1,1}(N^\eps)}.
\end{equation}
By monotonicity, we have that 
\begin{equation}
  \sum_{z \in \BS^\eps} \bigo{1} \abs{\int_{V_z^\eps} \int_0^t \del_s f(y,s) \de s \de A_{\del
  \Omega}} \ll \int_{N^\eps} \abs{\nabla f} \de v_g 
  \le \norm{f}_{\RW^{1,1}(N^\eps)}.
\end{equation}
This completes the proof that \eqref{eq:w11enough} holds, which was enough for our
purposes, and the proof of Theorem \ref{thm:boundaryhomogenisation} under
smoothness assumptions is complete.
\end{proof}
%\subsection{Proof of Theorem~\ref{thm:2d_optimisation}}

%An additional feature of variational eigenvalues for surfaces is the fact that the numerator of the Rayleigh quotient~\eqref{eq:Rayleigh} depends only on the conformal class of the metric $g$. In particular, it implies the following conformal invariance.

%\begin{lemma}
%\label{lemma:conf_invariance}
%Let $(\Omega_1,g_1)$, $(\Omega_2,g_2)$ be two compact smooth Riemannian surfaces, possibly with (smooth) boundary. If $F\colon \Omega_1\to\Omega_2$ is a conformal map, which is a diffeomorphism up to the boundary, then for any Radon measure $\mu$ on $\Omega_1$ and any $k\geqslant 0$ one has $\lambda_k(\Omega_1,g_1,\mu) = \lambda_k(\Omega_2,g_2,F_*\mu)$.
%\end{lemma} 
%\begin{proof}
%Under the assumptions of the Lemma, the pullback $F^*\colon C^\infty(\Omega_1)\to C^\infty(\Omega_2)$ is an isomorphism. Thus, one has 
%$$
%\lambda_k(\Omega_1,g_1,\mu) = \lambda_k(\Omega_2, (F^{-1})^*g_1,F_*\mu) = \lambda_k(\Omega_2,g_2,F_*\mu),
%$$
%where the latter equality is due to the conformal invariance of the Dirichlet integral and the fact that $g_2$
%is conformal to $(F^{-1})^*g_1$.  
%\end{proof}

\subsection{Continuity of eigenvalues --- the singular setting}
\label{sec:Lipschitz}

\subsubsection{Singular densities}

We first give a condition on $\beta$ so that $\beta \CH^{d-1}\lfloor_{\del
\Omega}$ is an admissible
measure.
\begin{lemma}
  \label{lem:admissible}
  Suppose that $d \ge 3$ (respectively $d = 2$) and that $0 \not \equiv \beta
  \in \RL^{d-1}(\del \Omega;[0,\infty))$ (respectively in $\RL \log \RL(\del
  \Omega;[0,\infty))$) is a
  nonnegative function. Then, the trace $T_\beta : \RW^{1,2}(\Omega) \to
  \RL^2(\del \Omega,\beta \CG^{d-1}\lfloor_{\del \Omega})$ is compact; in other
  words $\mu_\beta = \beta \CH^{d-1}\lfloor_{\del \Omega}$ is an admissible
  measure.
\end{lemma}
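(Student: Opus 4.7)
The plan is to factor $T_\beta$ through a suitable Sobolev--Orlicz trace embedding and then obtain compactness by approximating $\beta$ in its integrability class by bounded densities, for which the statement is standard.

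First, I would establish boundedness of $T_\beta$. For $d \ge 3$, the Sobolev trace inequality gives $W^{1,2}(\Omega) \hookrightarrow \RL^{\frac{2(d-1)}{d-2}}(\del \Omega)$, and H\"older's inequality yields
\begin{equation}
\int_{\del \Omega} \abs u^2 \beta \de \CH^{d-1} \le \norm{u}_{\RL^{2(d-1)/(d-2)}(\del \Omega)}^2 \norm{\beta}_{\RL^{d-1}(\del \Omega)} \ll \norm{u}_{\RW^{1,2}(\Omega)}^2 \norm{\beta}_{\RL^{d-1}(\del \Omega)}.
\end{equation}
For $d = 2$, I would invoke the Moser--Trudinger trace inequality, which gives $W^{1,2}(\Omega) \hookrightarrow \exp \RL^2(\del \Omega)$ and in particular $\norm{u^2}_{\exp \RL(\del \Omega)} \ll \norm u_{\RW^{1,2}(\Omega)}^2$. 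H\"older's inequality in the Orlicz duality between $\exp \RL$ and $\RL \log \RL$ then yields the analogous bound with $\norm \beta_{\RL \log \RL(\del \Omega)}$.

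Next, for compactness I would use a truncation/approximation argument. For $M > 0$, let $\beta_M := \min(\beta, M) \in \RL^\infty(\del \Omega)$. Since the standard trace $\RW^{1,2}(\Omega) \to \RL^2(\del \Omega, \CH^{d-1}\lfloor_{\del \Omega})$ is compact and $\beta_M$ is bounded, the operator $T_{\beta_M}$ is compact. To conclude that $T_\beta$ is also compact, I would show that $T_{\beta_M} \to T_\beta$ in operator norm as $M \to \infty$. By the boundedness step applied to $\beta - \beta_M$,
\begin{equation}
\norm{T_\beta - T_{\beta_M}}_{\RW^{1,2}(\Omega) \to \RL^2(\del \Omega, (\beta-\beta_M)\CH^{d-1})}^2 \ll \norm{\beta - \beta_M}_X,
\end{equation}
where $X = \RL^{d-1}(\del \Omega)$ for $d \ge 3$ and $X = \RL \log \RL(\del \Omega)$ for $d = 2$. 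Since $\beta \in X$ and $\beta - \beta_M \to 0$ pointwise with $\abs{\beta - \beta_M} \le \beta$, the dominated convergence theorem in $X$ (which applies since both $\RL^{d-1}$ and $\RL \log \RL$ have absolutely continuous norm) ensures $\norm{\beta - \beta_M}_X \to 0$. Thus $T_\beta$ is a norm limit of compact operators, hence compact, and admissibility of $\mu_\beta$ follows.

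The main obstacle is the $d = 2$ case, where I need the Moser--Trudinger trace embedding $W^{1,2}(\Omega) \to \exp \RL^2(\del \Omega)$ on a Lipschitz domain (so that $u^2$ lies in $\exp \RL$), together with the Orlicz H\"older inequality pairing $\exp \RL$ with its dual $\RL \log \RL$; these are classical but require citing the Orlicz framework carefully, e.g.\ as in \cite{bennettsharpley}. Everywhere else the argument is routine once the correct integrability exponents are matched via the Sobolev trace.
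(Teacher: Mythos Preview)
Your proposal is correct and follows essentially the same approach as the paper: factor $T_\beta$ through the critical trace embedding ($\RW^{1,2}(\Omega)\to \RL^{2(d-1)/(d-2)}(\partial\Omega)$ for $d\ge 3$, and $\RW^{1,2}(\Omega)\to \exp\RL^2(\partial\Omega)$ for $d=2$) and exhibit $T_\beta$ as a norm limit of the compact operators $T_{\beta_M}$. The only cosmetic issue is that your displayed bound writes the target as $\RL^2(\partial\Omega,(\beta-\beta_M)\CH^{d-1})$, which changes with $M$; to speak of a norm limit you should fix a common target (e.g.\ view $T_\beta$ and $T_{\beta_M}$ as $u\mapsto \sqrt{\beta}\,u|_{\partial\Omega}$ and $u\mapsto \sqrt{\beta_M}\,u|_{\partial\Omega}$ in unweighted $\RL^2(\partial\Omega)$, using $(\sqrt\beta-\sqrt{\beta_M})^2\le \beta-\beta_M$), after which your H\"older and dominated-convergence estimates go through verbatim.
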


\begin{proof}
    The case $d = 2$ is proven in \cite[Proposition 2.2]{KLP}, by factoring
    $T_\beta$ through the bounded trace $\RW^{1,2}(\Omega) \to \exp \RL^2(\del \Omega)$ and
    appropriate multiplication operators, so that $T_\beta$ is seen
    to be a norm limit of compact operators. The case $d \ge 3$ is dealt with in
    the same way, using instead the bounded trace $\RW^{1,2}(\Omega) \to
    \RL^{\frac{2(d-1)}{d-2}}(\del \Omega)$ given by Gagliardo's trace theorem
    \cite{gagliardo}. 
\end{proof}

\begin{prop}
  \label{prop:lpconv}
Let $d \ge 3$ (respectively $d = 2$) and let $\beta_n$ be a sequence of
non-negative densities converging in $\RL^{d-1}(\del \Omega)$ (respectively
$\RL \log \RL(\del \Omega))$ to a
  non-negative density $\beta$. Then, as $n \to \infty$ we have $\lambda_k(M,g,\beta_n\de
  A_g) \to \lambda_k(M,g,\beta \de A_g)$.
\end{prop}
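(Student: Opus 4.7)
The plan is to apply Proposition~\ref{prop:weakenough} with $\Omega_n = \Omega$ for every $n$, identity extensions $J_n = \Id$, and measures $\mu_n = \beta_n \CH^{d-1}\lfloor_{\del \Omega}$ converging to $\mu = \beta \CH^{d-1}\lfloor_{\del \Omega}$. Condition \cvi is then trivial; Condition \cii follows from Lemma~\ref{lem:admissible} applied to each $\beta_n$ and to $\beta$, since the convergence hypothesis ensures every $\beta_n$ lies in the required integrability class; and for Condition \ci the volume part is trivial while the weak-$*$ convergence of $\mu_n$ to $\mu$ follows from $\|\beta_n - \beta\|_{\RL^1(\del \Omega)} \to 0$, itself a consequence of convergence in $\RL^{d-1}$ (respectively $\RL \log \RL$) on the finite-area set $\del \Omega$.

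The crux of the argument is convergence of $\mu_n$ to $\mu$ in the appropriate dual Sobolev space. When $d \ge 3$, Gagliardo's trace theorem provides a bounded trace $\RW^{1, d/(d-1)}(\Omega) \to \RL^{(d-1)/(d-2)}(\del \Omega)$, and since the exponents $(d-1)/(d-2)$ and $d-1$ are H\"older-conjugate, one directly estimates
\begin{equation}
    |\langle \mu_n - \mu, f \rangle| \le \|\beta_n - \beta\|_{\RL^{d-1}(\del \Omega)} \|f\|_{\RL^{(d-1)/(d-2)}(\del \Omega)} \le C \|\beta_n - \beta\|_{\RL^{d-1}(\del \Omega)} \|f\|_{\RW^{1,d/(d-1)}(\Omega)},
\end{equation}
yielding $\mu_n \to \mu$ in $\RW^{1,d/(d-1)}(\Omega)^*$. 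This exponent satisfies $d/(d-1) < 2$, so the conclusion is legitimate.

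The $d=2$ case is the main technical obstacle. Because $\RL \log \RL(\del \Omega)$ does not embed into any $\RL^{1+\eps}(\del \Omega)$, one cannot derive $\RW^{1,p}(\Omega)^*$ convergence for $p<2$ by a naive H\"older pairing against an $\RL^{p/(2-p)}$ trace. Instead, one must exploit the duality between $\RL \log \RL$ and $\exp \RL$ together with an endpoint trace inequality of Moser--Trudinger type into the slightly enlarged Sobolev space, namely $\RW^{1,2,-1/2}(\Omega) \hookrightarrow \exp \RL(\del \Omega)$. This is essentially the technical content underlying Lemma~\ref{lem:admissible} and \cite[Proposition~2.2]{KLP}; once it is invoked, convergence of $\beta_n \to \beta$ in $\RL \log \RL$ immediately yields
\begin{equation}
    |\langle \mu_n - \mu, f\rangle| \le C \|\beta_n - \beta\|_{\RL \log \RL(\del \Omega)} \|f\|_{\RW^{1,2,-1/2}(\Omega)} \to 0,
\end{equation}
so $\mu_n \to \mu$ in $\RW^{1,2,-1/2}(\Omega)^*$. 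With both dual-space convergences established, Proposition~\ref{prop:weakenough} delivers $\lambda_k(\Omega, g, \beta_n \de A_g) \to \lambda_k(\Omega, g, \beta \de A_g)$ as required.
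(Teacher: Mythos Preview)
Your proof is correct and follows essentially the same route as the paper: verify conditions \ci--\cvi (trivially, via Lemma~\ref{lem:admissible}), then establish dual Sobolev convergence using Gagliardo's trace theorem and H\"older's inequality for $d\ge 3$, and the endpoint trace $\RW^{1,2,-1/2}(\Omega)\to\exp\RL(\del\Omega)$ paired with $\RL\log\RL$--$\exp\RL$ duality for $d=2$. The paper attributes the latter trace embedding to \cite[Theorem~5.3]{cianchipick}, but otherwise the arguments coincide.
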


\begin{proof}
  Conditions \ci--\cvi are respected, the only non-trivial one being \cii
  which follows from Lemma \ref{lem:admissible}. Let $u \in
  \RW^{1,\frac{d}{d-1}}(\Omega)$, for $d \ge 3$. Then, the embedding
  $\RW^{1,\frac{d}{d-1}}(\Omega) \to \RL^{\frac{d-1}{d-2}}(\del\Omega)$ given by
  Gagliardo's trace theorem \cite{gagliardo} and H\"older's inequality with
  exponents $d-1$ and $\frac{d-1}{d-2}$ yield
  \begin{equation}
    \abs{\int_{\del \Omega} u (\beta_n - \beta) \de A_g} \ll_{d,\Omega}
      \norm{u}_{\RW^{1,\frac{d}{d-1}}(\Omega)} \norm{\beta_n -
      \beta}_{\RL^{d-1}(\del \Omega)}.
  \end{equation}
  This precisely means that $\beta_n \de A_g \to \beta \de A_g$ in
  $\RW^{1,\frac{d}{d-1}}(\Omega)^*$, so that the eigenvalues converge. For $d =
  2$, the same proof holds replacing Gagliardo's trace theorem with the trace
  operator $\RW^{1,2,-1/2}(\Omega) \to \exp \RL(\del \Omega)$, see \cite[Theorem
  5.3]{cianchipick}, and H\"older's inequality on
  $\exp \RL(\del \Omega)$ and $\RL \log \RL(\del \Omega)$. 
\end{proof}

\subsubsection{Lipschitz boundary}

In order to study convergence of eigenvalues of domains with Lipschitz boundary,
we need \cite[Theorem 4.1]{BGT}. Note that this result is proven in
the Euclidean setting, but its proof extends to the Riemannian setting
directly, see \cite[Lemma 3.1]{GL} for an adaptation to
the Riemannian setting of the only part of the proof which is not completely
local.
\begin{prop}
  \label{prop:bgt}
  Let $\Omega$ be a manifold with Lipschitz boundary and for all $n \in \N$, let
  $\Omega_n \subset \Omega$ be Lipschitz domains such that $\bone_{\Omega_n} \to
  \bone_\Omega$, strongly in $\RL^1(\Omega)$, $\CH^{d-1}(\del \Omega_n) \to \CH^{d-1}(\del
  \Omega)$ and
  \begin{equation}
  \sup_{n} \norm{T_n}_{\RB\RV(\Omega_n) \to \RL^1(\del \Omega_n)} < \infty
  \end{equation}
  where $T_n$ is the trace operator. Then, for every $k \in \N$,
  $\sigma_k(\Omega_n,g) \to \sigma_k(\Omega,g)$.
\end{prop}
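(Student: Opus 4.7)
The plan is to apply Proposition \ref{prop:weakenough} with $\mu_n := \CH^{d-1}\lfloor_{\del \Omega_n}$ and $\mu := \CH^{d-1}\lfloor_{\del \Omega}$. Under this identification, $\sigma_k(\Omega_n,g) = \lambda_k(\Omega_n, g, \mu_n)$ and the analogous statement holds for $\Omega$, so the desired convergence reduces to verifying conditions \ci, \cii, \cvi together with the dual convergence of $\mu_n$ to $\mu$ in $\RW^{1,d/(d-1)}(\Omega)^*$ (or in $\RW^{1,2,-1/2}(\Omega)^*$ when $d=2$).

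Condition \ci is essentially built into the hypotheses: $\Vol_g(\Omega \setminus \Omega_n) \to 0$ is immediate from $\bone_{\Omega_n} \to \bone_\Omega$ in $\RL^1(\Omega)$, while the weak-$*$ convergence of the surface measures follows from combining the classical lower semicontinuity of perimeter, which yields $\liminf_n \mu_n(U) \ge \mu(U)$ on open sets $U$, with the assumed convergence of the total masses $\CH^{d-1}(\del \Omega_n) \to \CH^{d-1}(\del \Omega)$. Condition \cii holds because each $\Omega_n$ is Lipschitz, so that the trace $\RW^{1,2}(\Omega_n) \to \RL^2(\del \Omega_n)$ is compact. The uniform bound $\sup_n \norm{T_n}_{\RBV(\Omega_n) \to \RL^1(\del \Omega_n)} < \infty$ is what supplies the equibounded extensions needed for \cvi: in charts around $\del \Omega$, one reflects across $\del \Omega_n$ in Fermi coordinates and glues via a partition of unity, with operator norms controlled by the equi-Lipschitz property encoded in the uniform trace bound.

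For the dual convergence, given a smooth $f$ on $\Omega$, I would pick a smooth vector field $e$ with $e = \nu$ on $\del \Omega$ and apply the divergence theorem to $fe$ on both $\Omega_n$ and $\Omega$. This writes the difference $\int_{\del \Omega_n} f \, \de A_n - \int_{\del \Omega} f \, \de A$ as $\int_{\Omega \setminus \Omega_n} \div(fe) \, \de v_g$ plus a boundary term at $\del \Omega_n$ arising from $e \cdot \nu_n \neq 1$. The first term is controlled by $\norm{f}_{\RW^{1,d/(d-1)}(\Omega)} \Vol_g(\Omega \setminus \Omega_n)^{1/d} \to 0$ via H\"older's inequality, while the boundary discrepancy is absorbed using the uniform trace bound on $\del \Omega_n$, with an analogous Orlicz-space estimate (using the exp-$\RL$/$\RL \log \RL$ duality) when $d = 2$.

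The main obstacle is the construction of uniformly bounded extensions for condition \cvi in the Riemannian setting, as the geometry of tubular neighborhoods of $\del \Omega_n$ can in principle degenerate in the limit, which would obstruct the gluing of local Euclidean reflections into a global extension with controlled norm. This non-local step is precisely what \cite[Lemma 3.1]{GL} handles; modulo that adaptation, the remaining parts of the argument are local and reduce to the Euclidean proof of \cite[Theorem 4.1]{BGT} through a standard chart-by-chart analysis.
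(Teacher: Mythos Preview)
The paper does not actually prove this proposition through its own variational-eigenvalue framework; the text preceding the statement says explicitly that this is \cite[Theorem 4.1]{BGT}, proved there in the Euclidean setting, with the only non-local step of that proof adapted to manifolds via \cite[Lemma 3.1]{GL}. In other words, Proposition~\ref{prop:bgt} is cited as an external black box, not derived from Proposition~\ref{prop:weakenough}.

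Your attempt to rederive it through Proposition~\ref{prop:weakenough} is a genuinely different route, but it has two real gaps. First, condition~\cvi asks for equibounded \emph{extension} operators $\RW^{1,2}(\Omega_n)\to\RW^{1,2}(\Omega)$, and a uniform $\RBV\to\RL^1$ \emph{trace} bound does not supply these in any direct way; trace and extension are different operators, and while both are controlled for equi-Lipschitz families, the hypotheses of the proposition do not assume equi-Lipschitz character of $\partial\Omega_n$. Your claim that the trace bound ``encodes'' an equi-Lipschitz property is not justified. Second, in the dual-convergence step the discrepancy term $\int_{\partial\Omega_n} f\,(1-e\cdot\nu_n)\,\rd A_n$ is not small without some control on the normals $\nu_n$, and the uniform trace bound alone does not provide that; strict $\RBV$ convergence of $\bone_{\Omega_n}$ gives weak-$*$ convergence of $\nu_n\,\CH^{d-1}\lfloor_{\partial\Omega_n}$, which is enough for continuous test functions but not obviously for $\RW^{1,d/(d-1)}$ test functions with the quantitative rate you need.

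Your closing paragraph effectively concedes that the hard step must be outsourced to \cite{BGT} and \cite{GL}---which is exactly what the paper does---so the detour through Proposition~\ref{prop:weakenough} does not buy anything: once you invoke \cite[Theorem 4.1]{BGT} you already have the eigenvalue convergence and there is nothing left to check.
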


\begin{proof}[Proof of Theorem \ref{thm:boundaryhomogenisation} for manifolds
    with Lipschitz boundary]

We first prove that we can exhaust any compact manifold with Lipschitz boundary with a
sequence of domains with smooth boundary in such a way that the Steklov
eigenvalues are stable. 

Let $\Omega$ be a compact manifold with Lipschitz boundary. Following
\cite[Theorem A.1]{VerchotaThesis}, (see \cite[Appendix A]{mitreataylor} for a
discussion of the adaptation to the Riemannian case),  there exists a sequence of smooth
domains $\Omega_n \subset \Omega$ converging to $\Omega$ such that the
boundaries
of $\Omega$, $\Omega_n$ may be respectively parametrised by a finite number of
equi-Lipschitz maps
$\gamma_j$, $\gamma_{j,n}$ such that $\gamma_{j,n} \to \gamma_j$ uniformly. This
implies in particular that if $T^n : \RB\RV(\Omega_n) \to \RL^1(\del \Omega_n)$
is the trace operator, their norms remains uniformly bounded since it can be
estimated in terms of the Lipschitz constants of $\del \Omega_n$ and the volume
of $\Omega$ \cite{anzellottigiaquinta}. In particular, it follows from
Proposition \ref{prop:bgt} that for all $k \in \N$, $\sigma_k(\Omega_n,g) \to
\sigma_k(\Omega,g)$. 

It also follows from \cite[Theorem A.1]{VerchotaThesis} that there are
bi-Lipschitz homeomorphisms $\Phi_n$ from $\del \Omega_n$ to $\del \Omega$, whose
bi-Lipschitz character is preserved uniformly in $n$. In particular, $\Phi_n$ 
induces an isomorphism $\Phi_n^* : \RL^p(\del \Omega) \to \RL^p(\del \Omega)$
for every $p \in [1,\infty]$, whose norms are uniformly bounded in $n$.
Therefore, extracting a diagonal subsequence from
\begin{itemize}
\item first finding a
sequence of domains $\Omega_n$ with smooth boundary converging to $\Omega$;
\item then approximating
the weight $\beta \circ \Phi_n \in \RL^{d-1}(\del \Omega_n)$ by smooth weights
$\beta_{m,n}$;
\item finally finding a sequence of domains with Lipschitz boundary $\Omega^{\eps}_{m,n}$ so that
    $\CH^{d-1}\lfloor_{\del \Omega^\eps_{m,n}}$ converges to $\beta_{m,n}
    \CH^{d-1} \lfloor_{\del \Omega_n}$;
\end{itemize}
provides us with the required sequence of domains proving our claim.
\end{proof}
\bibliographystyle{alpha}
\bibliography{flex}

\end{document}